\documentclass[12pt,a4paper,psamsfonts]{amsart}
\usepackage{amssymb,amscd,amsxtra,calc}
\usepackage{cmmib57}

\setlength{\topmargin}{0cm}
\setlength{\oddsidemargin}{0cm}
\setlength{\evensidemargin}{0cm}
\setlength{\marginparwidth}{0cm}
\setlength{\marginparsep}{0cm}

\setlength{\textheight}{\paperheight - 2in -35pt}
\setlength{\textwidth}{\paperwidth - 2in}
\setlength{\headheight}{12.5pt}
\setlength{\headsep}{25pt}
\setlength{\footskip}{30pt}

\pagestyle{headings}

\theoremstyle{plain}
    \newtheorem{thm}{Theorem}[section]

    \newtheorem{lemma}[thm]{Lemma}
    
    \newtheorem{conjecture}[thm]{Conjecture}
    
    \newtheorem{theorem}[thm]{Theorem}

\theoremstyle{definition}
    \newtheorem{definition}[thm]{Definition}

\theoremstyle{remark}

\title[]{Convergence to minima for the continuous version of Backtracking Gradient Descent}

\author{Tuyen Trung Truong}
\address{Matematikk Institut, Universitetet i Oslo, Blindern, 0851 Oslo, Norway}
\email{tuyentt@math.uio.no}
\thanks{}
\date{\today}
\begin{document}
\begin{abstract}
Lee et al. and Panageas and Piliouras showed that if $f:\mathbb{R}^k\rightarrow \mathbb{R}$ is both a $C^2$ and $C^{1,1}_L$ function, and if $\delta <1/L$, then there is a set $\mathcal{E}$ of Lebesgue measure $0$ so that if $x_0\in \mathbb{R}^k\backslash \mathcal{E}$, then the Standard Gradient Descent (Standard GD) process $x_{n+1}=x_n-\delta \nabla f(x_n)$, {\bf if converges}, cannot converge to a {\bf generalised} saddle point. (Remark: for the convergence of $\{x_{n}\}$, more assumptions are needed, see the appendix of this paper.) In this paper, we prove a stronger result when replacing the Standard GD by a continuous version of Backtracking GD. More precisely, we have:

{\bf Theorem.} Let $f:\mathbb{R}^k\rightarrow \mathbb{R}$ be a $C^{1}$ function, so that $\nabla f$ is locally Lipschitz continuous. Assume moreover that $f$ is $C^2$ near its generalised saddle points. Fix real numbers $\delta _0>0$ and $0<\alpha <1$. Then there is a smooth function $h:\mathbb{R}^k\rightarrow (0,\delta _0]$ so that the map $H:\mathbb{R}^k\rightarrow \mathbb{R}^k$ defined by $H(x)=x-h(x)\nabla f(x)$ has the following property: 

(i) For all $x\in \mathbb{R}^k$, we have $f(H(x)))-f(x)\leq -\alpha h(x)||\nabla f(x)||^2$. 

(ii) For every $x_0\in \mathbb{R}^k$, the sequence $x_{n+1}=H(x_n)$ either satisfies $\lim _{n\rightarrow\infty}||x_{n+1}-x_n||=0$ or $
\lim _{n\rightarrow\infty}||x_n||=\infty$. Each cluster point of $\{x_n\}$ is a critical point of $f$. If moreover $f$ has at most countably many critical points, then $\{x_n\}$ either converges to a critical point of $f$ or $\lim _{n\rightarrow\infty}||x_n||=\infty$. 

(iii) There is a set $\mathcal{E}_1\subset \mathbb{R}^k$ of Lebesgue measure $0$ so that for all $x_0\in \mathbb{R}^k\backslash \mathcal{E}_1$,  the sequence $x_{n+1}=H(x_n)$, {\bf if converges}, cannot converge to a {\bf generalised} saddle point.  

(iv) There is a set $\mathcal{E}_2\subset \mathbb{R}^k$ of Lebesgue measure $0$ so that for all $x_0\in \mathbb{R}^k\backslash \mathcal{E}_2$, any cluster point of the sequence $x_{n+1}=H(x_n)$ is not a saddle point, and more generally cannot be an isolated generalised saddle point. 

When the local Lipschitz constants for $\nabla f$ are bounded from above by a continuous function, we use the same idea to prove the same result for a {\bf new discrete version} of  Backtracking GD. The condition we need is still more general than that required in those results mentioned above by Lee et al. and Panageas and Piliouras. Similar results hold for Backtracking versions of Momentum and NAG, first defined in our joint work with T. H. Nguyen.    

Since the literature on convergence for Gradient Descent methods can be very confusing, in the Appendix of this paper we will also provide a brief overview of previous major convergence results for Gradient Descent methods for the readers' convenience.  
\end{abstract}
\maketitle

\section{Introduction} This paper is about convergence to minima for Gradient Descent (GD) methods, with a view towards applications in Deep Neural Networks. In this section we explain briefly GD methods and state the main results of the paper, together with some remarks. A review the current state-of-the-art of theoretical results on convergence for GD methods, together with arguments for why one should care about convergence of these methods and allow more general cost functions in applications in Deep Neural Networks, is given in the Appendix of the paper. 

\subsection{Gradient descent methods.}\label{SubsectionGD} Being able to minimise a $C^1$ function $f:\mathbb{R}^k\rightarrow \mathbb{R}$ is an important problem in both theory and applications. In practical applications, one does not hope to find closed form solutions to this minimisation problem, but instead switch to iterative methods. 

In this paper, we concentrate on Gradient Descent (GD) methods, which are used in many fields such as Deep Learning. The general version of this method, invented by Cauchy in 1847, is as follows. Let $\nabla f(x)$ be the gradient of $f$ at a point $x$, and $||\nabla f(x)||$ its Euclidean norm in $\mathbb{R}^k$.  We choose randomly a point $x_0\in \mathbb{R}^k$ and define a sequence
\begin{eqnarray*}
x_{n+1}=x_n-\delta (x_n) \nabla f(x_n),
\end{eqnarray*}
where $\delta (x_n)>0$ (learning rate), is appropriately chosen. We hope that the sequence $\{x_n\}$ will converge to a (global) minimum point of $f$. 

The simplest and most known version of GD is Standard GD, where we choose $\delta (x_n)=\delta _0$ for all $n$, here $\delta _0$ is a given positive number. Because of its simplicity, it has been used frequently in Deep Neural Networks and other applications.  Another basic version of GD is (discrete) Backtracking GD, which works as follows. We fix real numbers $\delta _0>0$ and $0<\alpha ,\beta <1$. We choose $\delta (x_n)$ to be the largest number  $\delta $ among the sequence $\{\beta ^m\delta _0:~m=0,1,2,\ldots\}$ satisfying the Amijo's condition:
\begin{eqnarray*}
f(x_n-\delta \nabla f(x_n))-f(x_n)\leq -\alpha \delta ||\nabla f(x_n)||^2.  
\end{eqnarray*}

There are also the inexact version of GD (see e.g. \cite{bertsekas, truong-nguyen}). The main results of this paper hold also for the inexact version of Backtracking GD, but to keep the paper concise in order to convey better the main ideas behind, we concentrate on this exact version only.  More complicated variants of the above two basic GD methods include: Momentum, NAG, Adam,  for Standard GD (see an overview in \cite{ruder}); and Two-way Backtracking GD, Backtracking Momentum, Backtracking NAG  for Backtracking GD (first defined in \cite{truong-nguyen}). There is also a stochastic version, denoted by SGD, which is usually used to justify the use of Standard GD in Deep Neural Networks. An overview of the asymptotic behaviour of GD methods is included in Subsection \ref{SubsectionOverview}. 

For later use, here we recall that a function $f$ is in class $C^{1,1}_L$, for a positive number $L<\infty$, if $\nabla f(x)$ is {\bf globally} Lipschitz continuous with Lipschitz constant $L$. We also recall that a point $x_{\infty}$ is a cluster point of a sequence $\{x_n\}$ if there is a subsequence $\{x_{n_j}\}$ which converges to $x_{\infty}$. 

{\bf Remark.} In Subsection \ref{SubsectionMainResults}, we will define a {\bf new discrete version} of Backtracking GD, for a class of cost functions including all $C^2$ functions and all $C^{1,1}_L$ functions.  

\subsection{Abundance of saddle points.} Besides minima, other common critical points for a function are maxima and saddle points. In fact, for a $C^2$ cost function, a non-degenerate critical point can only be one of these three types. While maxima are rarely a problem for descent methods, saddle points can theoretically be problematic, as we will present later in this subsection. Before then, we recall definitions of saddle points and generalised saddle points for the sake of unambiguous presentation. Let $f:\mathbb{R}^k\rightarrow \mathbb{R}$ be a $C^1$ function. Let $x_0$ be a critical point of $f$ near it  $f$ is $C^2$. 

{\bf Saddle point.} We say that $x_0$ is a saddle point if the Hessian $\nabla ^2f(x_0)$ is non-singular and has both positive and negative eigenvalues. 

{\bf Generalised saddle point.} We say that $x_0$ is a {\bf generalised} saddle point if the Hessian $\nabla ^2f(x_0)$ has at least one negative eigenvalue. Hence, this is the case for a non-degenerate maximum point.  

In practical applications, we would like the sequence $\{x_n\}$ to converge to a minimum point.  It has been shown in \cite{dauphin-pascanu-gulcehre-cho-ganguli-bengjo} via experiments that for cost functions appearing in DNN the ratio between minima and other types of critical points becomes exponentially small when the dimension $k$ increases, which illustrates a theoretical result for generic functions  \cite{bray-dean}. Which leads to the question: Would in most cases GD  converge to a minimum? This question will be addressed in our main theorems, and readers can consult Subsection \ref{SubsectionOverview}  for a detailed discussion about previous work. Before the work in  \cite{lee-simchowitz-jordan-recht}, we are not aware of any work in the literature which systematically and rigorously treats the avoidance of saddle points under general settings, both for GD and other iterative methods such as Newton's.

\subsection{The main results}\label{SubsectionMainResults} We are now ready to state the main results of this paper. 
\begin{theorem}
Let $f:\mathbb{R}^k\rightarrow \mathbb{R}$ be a $C^{1}$ function, so that $\nabla f$ is locally Lipschitz continuous. Assume moreover that $f$ is $C^2$ near its generalised saddle points. Fix real numbers $\delta _0>0$ and $0<\alpha <1$. Then there is a smooth function $h:\mathbb{R}^k\rightarrow (0,\delta _0]$ so that the map $H:\mathbb{R}^k\rightarrow \mathbb{R}^k$ defined by $H(x)=x-h(x)\nabla f(x)$ has the following property: 

(i) For all $x\in \mathbb{R}^k$, we have $f(H(x)))-f(x)\leq -\alpha h(x)||\nabla f(x)||^2$. 

(ii) For every $x_0\in \mathbb{R}^k$, the sequence $x_{n+1}=H(x_n)$ either satisfies $\lim _{n\rightarrow\infty}||x_{n+1}-x_n||=0$ or $
\lim _{n\rightarrow\infty}||x_n||=\infty$. Each cluster point of $\{x_n\}$ is a critical point of $f$. If moreover, $f$ has at most countably many critical points, then $\{x_n\}$ either converges to a critical point of $f$ or $\lim _{n\rightarrow\infty}||x_n||=\infty$. 

(iii) There is a set $\mathcal{E}_1\subset \mathbb{R}^k$ of Lebesgue measure $0$ so that for all $x_0\in \mathbb{R}^k\backslash \mathcal{E}_1$,  the sequence $x_{n+1}=H(x_n)$, {\bf if converges}, cannot converge to a {\bf generalised} saddle point.  

(iv) There is a set $\mathcal{E}_2\subset \mathbb{R}^k$ of Lebesgue measure $0$ so that for all $x_0\in \mathbb{R}^k\backslash \mathcal{E}_2$, any cluster point of the sequence $x_{n+1}=H(x_n)$ is not a saddle point, and more generally cannot be an isolated generalised saddle point. 
\label{TheoremMain}\end{theorem}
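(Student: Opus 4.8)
The plan is to construct the step-size function $h$ by a partition-of-unity argument powered by the descent lemma, and then to deduce the four assertions from the monotonicity of $\{f(x_n)\}$ together with the centre--stable manifold theorem — the same tools used in \cite{lee-simchowitz-jordan-recht}, but now exploiting the much greater flexibility a variable learning rate affords. \emph{Construction of $h$ and proof of (i).} Since $\nabla f$ is locally Lipschitz, I would fix a locally finite cover of $\mathbb{R}^k$ by balls $B(p_j,\rho_j)$ such that $\nabla f$ is $L_j$-Lipschitz on $\overline{B(p_j,2\rho_j)}$, and set $M_j=\sup_{\overline{B(p_j,\rho_j)}}\|\nabla f\|$. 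For $x\in B(p_j,\rho_j)$ and $0<\delta\le\rho_j/M_j$ the point $x-\delta\nabla f(x)$ still lies in $B(p_j,2\rho_j)$, so the descent lemma gives $f(x-\delta\nabla f(x))-f(x)\le-\delta\|\nabla f(x)\|^2\bigl(1-\tfrac12 L_j\delta\bigr)$; hence (i) holds at $x$ for every $0<\delta\le\delta_j:=\min\{\delta_0,\,(1-\alpha)/L_j,\,\rho_j/M_j\}$, with the evident conventions when $L_j$ or $M_j$ vanishes. Taking a smooth partition of unity $\{\chi_j\}$ subordinate to the cover, any smooth $h$ with $0<h(x)\le\max\{\delta_j:\chi_j(x)>0\}$ then satisfies (i), because the whole interval $(0,\max_j\delta_j]$ consists of admissible step sizes at $x$. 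Using the remaining freedom I would moreover arrange $\|\nabla h(x)\|\,\|\nabla f(x)\|<\alpha$ everywhere — a standard construction, choosing the cover fine and the local values of $h$ small compared with the radii, the overlap multiplicity and $\|\nabla f\|$. Since $h(x)\le\delta_j\le(1-\alpha)/L_j$ and $\|\nabla^2 f(x)\|\le L_j$ for the index $j$ realising the maximum (wherever $\nabla^2 f$ exists), one gets $h(x)\|\nabla^2 f(x)\|\le 1-\alpha$, so the symmetric matrix $I-h(x)\nabla^2 f(x)$ occurring in $DH(x)=I-\nabla f(x)\,(\nabla h(x))^{T}-h(x)\nabla^2 f(x)$ is positive definite with inverse of norm $\le 1/\alpha$; together with $\|\nabla h\|\,\|\nabla f\|<\alpha$ this keeps the rank-one correction from being an eigenvalue obstruction, so $\det DH\neq 0$ almost everywhere, and since $H$ is locally Lipschitz the area formula then gives that $H^{-1}(Z)$ is Lebesgue-null whenever $Z$ is. Finally, at a critical point $x^*$ near which $f$ is $C^2$, the map $H$ is $C^1$ and $DH(x^*)=I-h(x^*)\nabla^2 f(x^*)$, whose eigenvalues $1-h(x^*)\mu$ (with $\mu$ an eigenvalue of the Hessian) lie in $(0,2)$ for $\mu\ge 0$ and exceed $1$ for $\mu<0$: thus $x^*$ is a fixed point of $H$ at which $DH$ is invertible, and it has a non-trivial unstable subspace exactly when $x^*$ is a generalised saddle point.

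\emph{Proof of (ii).} From (i), $\{f(x_n)\}$ is non-increasing and $\|x_{n+1}-x_n\|^2=h(x_n)^2\|\nabla f(x_n)\|^2\le(\delta_0/\alpha)\bigl(f(x_n)-f(x_{n+1})\bigr)$. If $\lim_n\|x_n\|=\infty$ fails, some subsequence converges, say to $x_\infty$, whence $f(x_n)\downarrow f(x_\infty)$, the displayed inequality telescopes, and $\|x_{n+1}-x_n\|\to 0$. Any cluster point $y$ then satisfies $h(y)\|\nabla f(y)\|=0$ with $h(y)>0$, i.e. $\nabla f(y)=0$. When the orbit is bounded its cluster set is a non-empty connected compact set of critical points; if $f$ has at most countably many critical points this set reduces to a point and $x_n$ converges to it. (A non-escaping unbounded orbit with $\|x_{n+1}-x_n\|\to 0$ would have cluster points filling the spheres of radius in some interval, hence uncountably many critical points — impossible under that hypothesis.)

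\emph{Proof of (iii) and (iv).} Suppose $x_n=H^n(x_0)\to x^*$ with $x^*$ a generalised saddle point. By the first paragraph, near $x^*$ the map $H$ is a $C^1$ local diffeomorphism whose linearisation has an eigenvalue of modulus $>1$, so the centre--stable manifold theorem (cf. \cite{lee-simchowitz-jordan-recht}) supplies a neighbourhood $U\ni x^*$ and a $C^1$ embedded disc $W^{cs}\subset U$ of dimension $<k$ — hence Lebesgue-null — with the property that every point whose entire forward $H$-orbit stays in $U$ belongs to $W^{cs}$. Since $x_n\to x^*$, for large $N$ the forward orbit of $x_N$ stays in $U$, so $x_N\in W^{cs}$ and $x_0\in H^{-N}(W^{cs})$. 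Covering the generalised saddle points by countably many such neighbourhoods (Lindelöf), $\mathcal{E}_1$ is contained in a countable union of sets $H^{-N}(W^{cs})$, each null by the area-formula remark; so $\mathcal{E}_1$ is null. For (iv), take $\mathcal{E}_2=\mathcal{E}_1\cup\bigcup H^{-N}(W^{cs}(x^*))$, the outer union running over the (countable) set of generalised saddle points that are isolated among the critical points of $f$. If such an $x^*$ were a cluster point of $\{x_n\}$ for some $x_0\notin\mathcal{E}_2$, shrink $U=U(x^*)$ so that it contains no other critical point; since $x^*$ is a cluster point some iterate $x_{n_j}$ enters $U$. Either the forward orbit of $x_{n_j}$ stays in $U$ forever for infinitely many $j$ — then $x_{n_j}\in W^{cs}(x^*)$ and $x_0\in H^{-n_j}(W^{cs}(x^*))\subset\mathcal{E}_2$, a contradiction — or, for all large $j$, the forward orbit of $x_{n_j}$ leaves $U$, and then, because $\|x_{n+1}-x_n\|\to 0$ and $x_{n_j}\to x^*$, the orbit must pass through a fixed compact annulus around $x^*$ inside $U$, producing a cluster point $z\in U\setminus\{x^*\}$, which by (ii) is a critical point, contradicting the choice of $U$. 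Hence no such $x_0$ exists outside $\mathcal{E}_2$. Since a non-singular Hessian makes $\nabla f$ a local diffeomorphism, every saddle point is an isolated critical point, so the statement about saddle points is a special case.

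\emph{Main obstacle.} Everything past the construction of $h$ is an application of the descent lemma and the centre--stable manifold theorem; the real work is the construction itself. The function $h$ must simultaneously (a) keep the Armijo inequality (i) valid, (b) be small enough that $I-h(x^*)\nabla^2 f(x^*)$ is invertible at every generalised saddle point, so that $H$ is a local diffeomorphism there, and (c) have a gradient small enough that $\det DH\neq 0$ almost everywhere, which is precisely what upgrades the centre--stable argument from "a small set" to "a Lebesgue-null set" (it is what lets one pull the stable manifolds back through $H^{-N}$). Juggling (a)--(c) within the narrow step-size window dictated by the local Lipschitz constants of $\nabla f$ — constants that may blow up at infinity — is the delicate point; fortunately the three requirements are mutually compatible because, on the admissible window, the automatic bound $h\|\nabla^2 f\|\le 1-\alpha$ leaves a gap of size $\alpha$ into which the gradient term of $h$ can be fitted.
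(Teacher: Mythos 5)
Your proposal is correct and follows essentially the same route as the paper: a partition-of-unity construction of $h$ driven by local Lipschitz constants, the descent lemma for (i), the telescoping and connected-cluster-set argument for (ii), and the centre--stable manifold theorem plus Lindel\"of and null-preservation under $H^{-1}$ for (iii)--(iv). The only divergences are minor: the paper gets null-preservation from an explicit local lower bound $\|H(y_1)-H(y_2)\|\ge\lambda\|y_1-y_2\|$ (built into the weights $1/(10^j(M_j+1))$ of its partition of unity) together with an elementary covering lemma, rather than from your a.e.\ Jacobian bound and the area formula, and it deduces (iv) from the connectedness of the cluster set in $\mathbb{P}^k$ rather than from your annulus argument.
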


Using the idea in the proof of Theorem \ref{TheoremMain}, we can prove the same conclusion for a  {\bf new discrete version of} Backtracking GD, under assumptions more general than those needed in \cite{lee-simchowitz-jordan-recht, panageas-piliouras} for Standard GD. We first describe this new discrete version of Backtracking GD.

\begin{definition}
(Backtracking GD-New.) Let $f:\mathbb{R}^k\rightarrow \mathbb{R}$ be a $C^1$ function. Assume that there are {\bf continuous} functions $r,L:\mathbb{R}^k\rightarrow (0,\infty)$ so that for each $x\in \mathbb{R}^k$, the map $\nabla f$ is Lipschitz continuous on $B(x,r(x))$ with Lipschitz constant $L(x)$. 

The Backtracking GD-New procedure is defined as follows. Fix $\delta _0>0$ and $0<\alpha , \beta <1$.  For each $x\in \mathbb{R}^k$, we define $\widehat{\delta} (x)$ to be the largest number $\delta$ among  $\{\beta ^n\delta _0:~n=0,1,2,\ldots \}$ which satisfies the two conditions
\begin{eqnarray*}
\delta &<&\alpha /L(x),\\
\delta ||\nabla f(x)|| &<& r(x).  
\end{eqnarray*}

For any $x_0\in \mathbb{R}^k$, we then define the sequence $\{x_n\}$ as follows
\begin{eqnarray*}
x_{n+1}=x_n-\widehat{\delta} (x_n)\nabla f(x_n). 
\end{eqnarray*}
\label{DefinitionBacktrackingGDNew}\end{definition}

{\bf Examples.}  (i) If $f$ is in $C^{1,1}_L$, then $f$ satisfies the condition in Definition \ref{DefinitionBacktrackingGDNew} by defining $L(x)=L$ for all $x$.  (ii) If $f$ is in $C^2$, then we can choose any continuous function $L(x)$ so that $L(x)\geq \max _{z\in B(x,r(x))}||\nabla ^2f||$, where $r:\mathbb{R}^k\rightarrow (0,\infty )$ is any continuous function. 

We have the following result for the new discrete version of Backtracking GD in Definition \ref{DefinitionBacktrackingGDNew}. 
  
\begin{theorem}
Let $f:\mathbb{R}^k\rightarrow \mathbb{R}$ be a $C^{1}$ function which satisfies the condition in Definition \ref{DefinitionBacktrackingGDNew}. Assume moreover that $\nabla f$ is $C^2$ near its generalised saddle points. Choose $0<\delta _0$ and $0<\alpha ,\beta <1$. For any $x_0\in \mathbb{R}^k$, we construct the sequence $x_{n+1}=x_n-\widehat{\delta} (x_n) \nabla f(x_n)$ as in Definition \ref{DefinitionBacktrackingGDNew}. Then: 

(i) For all $n$ we have $f(x_n-\widehat{\delta} (x_n)\nabla f(x_n))-f(x_n)\leq -(1-\alpha ) \widehat{\delta} (x_n)||\nabla f(x_n)||^2$. 

(ii) For every $x_0\in \mathbb{R}^k$, the sequence $\{x_{n}\}$ either satisfies $\lim _{n\rightarrow\infty}||x_{n+1}-x_n||=0$ or $
\lim _{n\rightarrow\infty}||x_n||=\infty$. Each cluster point of $\{x_n\}$ is a critical point of $f$. If moreover, $f$ has at most countably many critical points, then $\{x_n\}$ either converges to a critical point of $f$ or $\lim _{n\rightarrow\infty}||x_n||=\infty$. 

(iii) For {\bf random choices} of $\delta _0, \alpha $ and $\beta$, there is a set $\mathcal{E}_1\subset \mathbb{R}^k$ of Lebesgue measure $0$ so that for all $x_0\in \mathbb{R}^k\backslash \mathcal{E}_1$, any cluster point of the sequence $\{x_{n}\}$ cannot be a saddle point, and more generally cannot be an isolated generalised saddle point. 

(iv) Assume that there is $L>0$ so that if $x$ is a {\bf non-isolated} generalised saddle point of $f$, then $L(x)\leq L$. Then, for {\bf random choices} of $\delta _0, \alpha $ and $\beta$ with $\delta _0< \alpha /L $ or $\beta ^{n_0+1}\delta _0<\alpha /L < \beta ^{n_0}\delta _0$ for some $n_0$, there is a set $\mathcal{E}_2\subset \mathbb{R}^k$ of Lebesgue measure $0$ so that for all $x_0\in \mathbb{R}^k\backslash \mathcal{E}_2$, if the sequence $\{x_n\}$ converges, then the limit point cannot be a generalised saddle point. 
\label{Theorem1}\end{theorem}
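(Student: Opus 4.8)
The plan is to run, essentially step for step, the argument behind Theorem~\ref{TheoremMain}, with the piecewise-constant backtracking step size $\widehat{\delta}$ playing the role of the smooth function $h$ there. For part (i) I would first check that $\widehat{\delta}(x)$ is well defined (both inequalities $\delta<\alpha/L(x)$ and $\delta\|\nabla f(x)\|<r(x)$ hold for every $\beta^{m}\delta_{0}$ with $m$ large) and then apply the descent lemma on the convex ball $B(x,r(x))$: since $\nabla f$ is $L(x)$-Lipschitz there and the condition $\widehat{\delta}(x)\|\nabla f(x)\|<r(x)$ keeps the segment $[x,x-\widehat{\delta}(x)\nabla f(x)]$ inside that ball, one gets $f(x-\widehat{\delta}(x)\nabla f(x))-f(x)\le -\widehat{\delta}(x)\|\nabla f(x)\|^{2}+\frac{1}{2}L(x)\widehat{\delta}(x)^{2}\|\nabla f(x)\|^{2}$, and $\widehat{\delta}(x)<\alpha/L(x)$ turns the right-hand side into $-(1-\alpha/2)\widehat{\delta}(x)\|\nabla f(x)\|^{2}\le -(1-\alpha)\widehat{\delta}(x)\|\nabla f(x)\|^{2}$. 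This is purely the descent lemma plus the two defining inequalities; nothing deep.

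For part (ii): by (i) the values $f(x_{n})$ are non-increasing. If $f(x_{n})\to-\infty$ then $\|x_{n}\|\to\infty$, because a continuous function is bounded below on every ball; otherwise $f(x_{n})$ converges and $\sum_{n}\widehat{\delta}(x_{n})\|\nabla f(x_{n})\|^{2}<\infty$, whence, from $\|x_{n+1}-x_{n}\|=\widehat{\delta}(x_{n})\|\nabla f(x_{n})\|\le\sqrt{\delta_{0}}\,(\widehat{\delta}(x_{n})\|\nabla f(x_{n})\|^{2})^{1/2}$, we get $\|x_{n+1}-x_{n}\|\to0$. If $x_{n_{j}}\to x_{\infty}$ with $\nabla f(x_{\infty})\ne0$, then continuity of $r,L,\nabla f$ yields a bound $\widehat{\delta}\ge\min(\delta_{0},\beta\min(\alpha/L,r/\|\nabla f\|))\ge c>0$ on a neighbourhood of $x_{\infty}$, contradicting $\widehat{\delta}(x_{n_{j}})\|\nabla f(x_{n_{j}})\|^{2}\to0$; hence every cluster point is critical. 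Finally, the cluster set of any sequence with $\|x_{n+1}-x_{n}\|\to0$ is connected in the one-point compactification $\mathbb{R}^{k}\cup\{\infty\}$, so if it lies in the critical set — which, being countable, is totally disconnected — it is a single point, either a critical point or the point at infinity, which is the stated dichotomy.

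For part (iii) the genuinely new feature is that $H$ is only piecewise $C^{1}$, and I would handle it with two observations. First, $H$ has the Luzin $(\mathrm{N}^{-1})$ property: on each level set $R_{m}=\{\widehat{\delta}=\beta^{m}\delta_{0}\}$ one has $H(x)=x-\beta^{m}\delta_{0}\nabla f(x)$, a locally Lipschitz map with a.e.\ derivative $I-\beta^{m}\delta_{0}\nabla^{2}f(x)$, and for a.e.\ $x\in R_{m}$ one has $\|\nabla^{2}f(x)\|\le L(x)$ while $\beta^{m}\delta_{0}=\widehat{\delta}(x)<\alpha/L(x)$, so every eigenvalue of $\beta^{m}\delta_{0}\nabla^{2}f(x)$ has modulus $<\alpha$ and $|\det DH|\ge(1-\alpha)^{k}$ a.e.\ on $R_{m}$; the area formula then shows that $H^{-1}$ of a null set is null, hence $\bigcup_{n\ge0}H^{-n}(Z)$ is null for every null $Z$. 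Second, the isolated critical points of $f$ form a countable set (each is the unique critical point of some rational ball), so list the isolated generalised saddle points as $\{p_{j}\}$; for random $\delta_{0},\alpha,\beta$ we may assume $\alpha/L(p_{j})\notin\{\beta^{m}\delta_{0}\}$ for all $j,m$, and then — because $\nabla f(p_{j})=0$ makes the $r$-inequality automatic near $p_{j}$ and $L$ is continuous — $\widehat{\delta}$ equals some constant $c_{j}\in(0,\delta_{0}]$ on a neighbourhood $U_{j}$ of $p_{j}$, where $H$ agrees with the $C^{1}$ local diffeomorphism $H_{j}(x)=x-c_{j}\nabla f(x)$ (invertible since $c_{j}\|\nabla^{2}f(p_{j})\|<\alpha<1$), whose differential $I-c_{j}\nabla^{2}f(p_{j})$ has an eigenvalue $1-c_{j}\lambda>1$ for a negative Hessian eigenvalue $\lambda$. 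The center--stable manifold theorem used in \cite{lee-simchowitz-jordan-recht,panageas-piliouras} then produces a set $S_{j}\subset U_{j}$ of dimension $<k$, hence Lebesgue-null, that contains every forward $H_{j}$-orbit staying near $p_{j}$. Put $\mathcal{E}_{1}=\bigcup_{j}\bigcup_{n\ge0}H^{-n}(S_{j})$: if $x_{0}\notin\mathcal{E}_{1}$ and some $p_{j}$ were a cluster point of $\{x_{n}\}$, then (isolated cluster point plus $\|x_{n+1}-x_{n}\|\to0$) the sequence would converge to $p_{j}$, so eventually $x_{n}\in U_{j}$, the $H_{j}$-orbit of $x_{N}$ stays near $p_{j}$, i.e.\ $x_{N}\in S_{j}$ and $x_{0}\in H^{-N}(S_{j})\subset\mathcal{E}_{1}$, a contradiction. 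A saddle point has non-singular Hessian, hence is an isolated critical point, so this also rules out saddle points.

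For part (iv) the obstruction is that there may be uncountably many non-isolated generalised saddle points, so the countable-union step of (iii) must be replaced by a Lindel\"of covering with locally uniform constants. The hypotheses enter as follows: since $L(x^{*})\le L$ at every non-isolated generalised saddle $x^{*}$ and $\delta_{0}<\alpha/L$ (resp.\ $\beta^{n_{0}+1}\delta_{0}<\alpha/L<\beta^{n_{0}}\delta_{0}$), continuity of $L$ and $\nabla f(x^{*})=0$ force $\widehat{\delta}\equiv\delta_{0}$ near $x^{*}$ in the first case, and in the second case force $\widehat{\delta}$ to take, near $x^{*}$, finitely many values $\beta^{m}\delta_{0}$, each with $\beta^{m}\delta_{0}\|\nabla^{2}f(x^{*})\|<1$; in either case every relevant map $H_{m}(x)=x-\beta^{m}\delta_{0}\nabla f(x)$ is a $C^{1}$ local diffeomorphism sharing, at the common fixed point $x^{*}$, the eigenbasis of $\nabla^{2}f(x^{*})$, with the same non-trivial unstable subspace $E^{u}$ (the negative-eigenvalue directions, uniformly expanding) and a uniform spectral gap $\|DH_{m}(x^{*})|_{E^{u}}^{-1}\|\cdot\|DH_{m}(x^{*})|_{(E^{u})^{\perp}}\|<1$. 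In the first case one is reduced, near the whole non-isolated saddle set, to the single fixed $C^{1}$ local diffeomorphism $x\mapsto x-\delta_{0}\nabla f(x)$, and the classical argument of \cite{lee-simchowitz-jordan-recht,panageas-piliouras} (Lindel\"of over the saddle set) applies; adjoining $\mathcal{E}_{1}$ from (iii) and $\bigcup_{n}H^{-n}(\{\text{isolated critical points}\})$, pulled back by the $(\mathrm{N}^{-1})$ map $H$, gives $\mathcal{E}_{2}$. The main obstacle I expect is the second case: one needs a center--stable manifold (or invariant-cone) statement for the piecewise ``switched'' map $H$ near $x^{*}$ — a set of dimension $<k$, of locally uniform size, containing every forward $H$-orbit that stays near $x^{*}$, no matter how $\widehat{\delta}$ switches among the finitely many $\beta^{m}\delta_{0}$ along the orbit. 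The common diagonalisation of these $H_{m}$ at $x^{*}$, the uniform expansion on $E^{u}$, and the uniform gap are exactly what should make a graph-transform construction go through uniformly over the family; arranging the Lindel\"of reduction so that the neighbourhood sizes and expansion rates remain locally uniform is the delicate point, and is where I expect most of the work to lie.
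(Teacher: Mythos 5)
Your parts (i)--(iii) are correct and follow essentially the route of the paper's proof. For (i) and (ii) the paper simply invokes \cite{bertsekas} and \cite{truong-nguyen} (after noting that $x-\widehat{\delta}(x)\nabla f(x)\in B(x,r(x))$ and that $\inf_K\widehat{\delta}>0$ on compact sets); your reconstruction of those arguments, including the connectedness-of-the-cluster-set step in the one-point compactification (the paper uses $\mathbb{P}^k$ and \cite{asic-adamovic}, which is the same idea), is sound. For (iii) you and the paper both reduce to: $\mathcal{S}$ countable, random parameters avoid the thresholds $\{\beta^m\delta_0\}$, hence $\widehat{\delta}$ is locally constant near each $p_j$ (the $r$-condition being automatic since $\nabla f(p_j)=0$), $H$ is a $C^1$ local diffeomorphism there with an expanding eigenvalue, convergence to $p_j$ follows from the isolatedness plus $\|x_{n+1}-x_n\|\to 0$, and the center-stable manifold theorem of \cite{lee-simchowitz-jordan-recht, panageas-piliouras} applies. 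The one place you genuinely diverge is the null-preimage step: the paper observes that around every point $\widehat{\delta}$ takes at most the two values $\widehat{\delta}(x)$ and $\widehat{\delta}(x)/\beta$, shows $H$ is lower-Lipschitz on each piece, and applies Lemma \ref{LemmaMeasureEstimate}; you instead use Rademacher plus the area formula with the a.e.\ bound $|\det DH|\geq(1-\alpha)^k$ on each level set $\{\widehat{\delta}=\beta^m\delta_0\}$. Both work, and your version is arguably cleaner since it avoids matching each pair of nearby points to a common Lipschitz ball.

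In (iv) you have a genuine gap, which you candidly flag: you do not actually prove a center-stable manifold statement for the ``switched'' map in the case $\beta^{n_0+1}\delta_0<\alpha/L<\beta^{n_0}\delta_0$. You should know that the paper does not do this either: it asserts that $\widehat{\delta}(z)=\widehat{\delta}(x^*)$ for $z$ near every non-isolated generalised saddle $x^*$, so that $H$ is an honest $C^1$ local diffeomorphism there and the Lindel\"of-plus-stable-manifold argument of \cite{panageas-piliouras} goes through verbatim. That assertion is immediate in the first case ($\delta_0<\alpha/L\leq\alpha/L(x^*)$ forces $\widehat{\delta}\equiv\delta_0$ near $x^*$), but in the second case local constancy of $\widehat{\delta}$ at $x^*$ requires $\alpha/L(x^*)\notin\{\beta^m\delta_0\}$, and the hypothesis only constrains $\alpha/L$ with $L$ an upper bound for $L(x^*)$ over a possibly uncountable set of saddles, so randomness of $(\delta_0,\alpha,\beta)$ does not obviously exclude coincidences $\alpha/L(x^*)=\beta^m\delta_0$ with $m\leq n_0$. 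So your worry about orbits switching between finitely many maps $H_m(x)=x-\beta^m\delta_0\nabla f(x)$ near a common fixed point is legitimate, and your proposed uniform graph-transform/invariant-cone construction (using the common unstable subspace of $\nabla^2 f(x^*)$ and the uniform expansion $1-\beta^m\delta_0\lambda>1$ for $\lambda<0$) is a reasonable way to close it --- but as written it is a plan, not a proof, and this sub-case is exactly where your argument (and, read strictly, the paper's) is incomplete.
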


{\bf Remarks.} Since the learning rates in both theorems are bounded by local Lipschitz constants of $\nabla f$,  if the sequence $\{x_n\}$ converges, then the convergence rate will satisfy the usual estimates in \cite{armijo}. One non-trivial situation when the condition in (iv) of Theorem \ref{Theorem1} is when we know by some reason that the set of all non-isolated critical points of $f$ is bounded. 
  
The main idea for the proof of Theorem \ref{TheoremMain} is as follows. From the assumption that $\nabla f$ is locally Lipschitz, we can choose locally in small open sets $U$ small enough learning rates $\delta$ so that  Armijo's condition is satisfied, $x\mapsto x-\delta \nabla f(x)$ is injective in that neighbourhood $U$ and moreover is a local diffeomorphism near generalised saddle points. Then we use a partition of unity to carefully glue together these local learning rates into a smooth positive, bounded function $h:\mathbb{R}^k\rightarrow (0,\delta _0]$. Then (i) holds by construction. In (ii), for showing that any cluster point of $\{x_n\}$ is a critical point of $f$, we use the arguments in \cite{bertsekas}.  For showing the remaining assertions in (ii), we follow the proofs in \cite{truong-nguyen} of the corresponding assertions for Backtracking GD. From the proof, we see that for parts (i) and (ii) only, we {\bf do not need} the assumption that $\nabla f$ is $C^2$ near its generalised saddle points. For proof of (iii), we use arguments in \cite{lee-simchowitz-jordan-recht, panageas-piliouras}. For (iv), we use additionally the fact that a saddle point is an isolated critical point of $f$, and the result in \cite{truong-nguyen} that the set of cluster points of $\{x_n\}$, considered in the real projective space $\mathbb{P}^k$, is connected.    

Note that parts (iii) and (iv) of Theorems \ref{TheoremMain} and \ref{Theorem1} have the same conclusion as the main results in \cite{lee-simchowitz-jordan-recht, panageas-piliouras}, while here we do not require that $\nabla f$ is globally Lipschitz continuous as in  those papers. While the learning rates in Theorem \ref{TheoremMain} (the function $h(x)$) here are not very explicitly determined, provided we can have an explicit estimate for the local Lipschitz constants of $\nabla f(x)$ then we can make $h(x)$ explicit. See examples in the next section for details, where we show that a similar construction can also be done under careful analysis for some maps whose gradient $\nabla f$ need not be locally Lipschitz. Note that in part (iv) of Theorem \ref{TheoremMain} and part (iii) of Theorem \ref{Theorem1}, when the cluster set of $\{x_n\}$ contains an isolated saddle point, a priori we not have that $\{x_n\}$ must  converge, but this fact turns out to follow from results in \cite{truong-nguyen}.  For a more detailed overview of previous results, the readers are invited to consult Subsection \ref{SubsectionOverview}. We note that (iii) and (iv) of Theorems \ref{TheoremMain} and \ref{Theorem1} answer in affirmative some variants of, and hence give support to, the following conjecture in \cite{truong-nguyen} (here stated a bit stronger than the original version, in view of the results we prove in this paper): 

\begin{conjecture} (Conjecture 5.1 in \cite{truong-nguyen}) Let $f:\mathbb{R}^k\rightarrow \mathbb{R}$ be a $C^1$ function and $C^2$ near its generalised saddle points. Then the set of initial points $x_0\in \mathbb{R}^k$ for which the cluster points of the sequence $\{x_n\}$ - constructed by the Backtracking GD method - contains a generalised saddle point has Lebesgue measure zero. 
 \label{ConjectureSaddlePoint}\end{conjecture}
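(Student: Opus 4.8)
The goal is to prove Conjecture \ref{ConjectureSaddlePoint}, that is, to transport the stable--manifold argument of \cite{lee-simchowitz-jordan-recht, panageas-piliouras} to the genuinely discrete Backtracking GD map $T(x):=x-\delta(x)\nabla f(x)$ of Subsection~\ref{SubsectionGD}. The obstruction that distinguishes this from Theorems \ref{TheoremMain} and \ref{Theorem1} is that the backtracking rate $\delta(\cdot)$, defined through Armijo's inequality, is only piecewise continuous, so $T$ is neither continuous nor injective on $\mathbb{R}^{k}$, and the clean fact ``the preimage of a null set under the diffeomorphism $T$ is null'' --- the engine of the Standard GD proofs --- is not available. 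The first structural point I would use is that \emph{near a generalised saddle point only finitely many honestly smooth maps occur}: if $x^{*}$ is a generalised saddle point then $f$ is $C^{2}$ on a ball $B=B(x^{*},\rho)$, $\nabla f$ is Lipschitz there with some constant $L_{B}$, and the descent lemma forces $\beta^{m_{1}}\delta_{0}\le\delta(x)\le\delta_{0}$ on a smaller ball $B'$ for a fixed index $m_{1}$; hence on $B'$ every step applies one of the $C^{1}$ maps $G_{m}:=\id-\beta^{m}\delta_{0}\,\nabla f$, $0\le m\le m_{1}$.

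Each $G_{m}$ fixes $x^{*}$ and $DG_{m}(x^{*})=I-\beta^{m}\delta_{0}\,\nabla^{2}f(x^{*})$; since $\nabla^{2}f(x^{*})$ has a negative eigenvalue, every $DG_{m}(x^{*})$ has an eigenvalue $>1$, and all of them are simultaneously diagonalised by the eigenbasis of $\nabla^{2}f(x^{*})$. Thus the family shares a common unstable subspace $E^{u}$, $\dim E^{u}\ge1$, and a common complement $E^{cs}$; after restricting $\delta_{0}$ (or making a generic choice of $\delta_{0},\alpha,\beta$, as already in Theorem \ref{Theorem1}(iii)--(iv), to rule out the resonances $1-\beta^{m}\delta_{0}\lambda_{i}=-1$) each $DG_{m}(x^{*})$ contracts (non-strictly) along $E^{cs}$ and expands uniformly along $E^{u}$, with rates independent of $m$. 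A non-autonomous Hadamard--Perron construction then gives, for each admissible symbol sequence $\underline{m}=(m_{0},m_{1},\dots)$, a Lipschitz center--stable graph $W^{cs}_{\underline{m}}\ni x^{*}$ over $E^{cs}$, of dimension $k-\dim E^{u}<k$, containing every orbit that uses $\underline{m}$ and stays in $B'$ forever. As the Backtracking symbol sequence is determined by the initial point, the bad set inside $B'$ is contained in $\bigcup_{\underline{m}}W^{cs}_{\underline{m}}$. Globalising is then routine for isolated generalised saddles: such a cluster point is approached by a convergent orbit, hence the orbit is eventually trapped in one of countably many balls $B'_{j}$ covering the generalised--saddle locus (which is itself Lebesgue--null, by the density-point criterion applied to $\{\nabla f=0\}$), and the bad set is the countable union of finite-word preimages $(G_{m_{N-1}}\circ\cdots\circ G_{m_{0}})^{-1}(W^{cs}_{\underline{m},j})$, each $C^{1}$ branch $G_{m}$ carrying null sets to null sets (for $\delta_{0}$ small, or away from $\{\det DG_{m}=0\}$).

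The main obstacle --- and, I believe, the real reason Conjecture \ref{ConjectureSaddlePoint} is still open --- is to control $\bigcup_{\underline{m}}W^{cs}_{\underline{m}}$. A priori this union only lies in a narrow cone about $E^{cs}$, which has positive measure; and the natural Hölder estimate on $\underline{m}\mapsto W^{cs}_{\underline{m}}$ (symbol sequences agreeing on their first $N$ entries give center--stable graphs $C^{0}$-close at the rate $(\text{contraction}/\text{expansion})^{N}$) only bounds the Hausdorff dimension of the union below $k$ when $m_{1}$ is small relative to $\dim E^{u}$ and the spectral gap --- which fails for large $\delta_{0}$. One therefore needs either a measure estimate directly on the union of these non-autonomous center--stable manifolds, or a finer analysis of which symbol sequences actually occur along convergent orbits; on this point I would try to prove that along an orbit converging to $x^{*}$ the step $\delta(x_{n})$ is eventually constant (heuristically because the Armijo ratio $\nabla f^{\top}\nabla^{2}f\,\nabla f/\|\nabla f\|^{2}$ is nonnegative on the center--stable directions), which would collapse the family to a single map and reduce the matter to \cite{lee-simchowitz-jordan-recht}. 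This is precisely how Theorems \ref{TheoremMain} and \ref{Theorem1} circumvent the issue: the learning rate there is engineered --- via the smooth glueing $h$, respectively via the continuous $L(x)$ --- to be \emph{locally constant} near each generalised saddle, so the single-map argument applies verbatim; transferring that mechanism to the original Armijo backtracking rate, together with the additional bookkeeping needed for non-isolated saddles (where the orbit need not converge), is the missing step.
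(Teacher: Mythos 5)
The statement you were asked to prove is Conjecture \ref{ConjectureSaddlePoint}, and the paper itself does \emph{not} prove it: it is quoted verbatim as an open conjecture from \cite{truong-nguyen}, the paper only claims that Theorems \ref{TheoremMain} and \ref{Theorem1} ``answer in affirmative some variants of, and hence give support to'' it, and ``solving Conjecture \ref{ConjectureSaddlePoint}'' (via Random Dynamical Systems) is explicitly listed as future work in Section 3. So there is no proof in the paper to compare against, and your proposal --- which openly stops short of a proof --- takes the correct posture. Your diagnosis is also accurate and essentially coincides with the paper's own framing. You correctly isolate why the single-map stable-manifold argument of \cite{lee-simchowitz-jordan-recht, panageas-piliouras} does not transfer to genuine Armijo backtracking: the map $x\mapsto x-\delta(x)\nabla f(x)$ is discontinuous even for smooth $f$ (Example 2.17 of \cite{truong-nguyen}, recalled in Section 3), near a generalised saddle the iteration selects among finitely many maps $G_m=\id-\beta^m\delta_0\nabla f$ according to a symbol sequence, and the resulting object is a union over uncountably many symbol sequences of codimension-$\geq 1$ Lipschitz center--stable graphs, which need not be Lebesgue-null. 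That is precisely why the theorems actually proved here \emph{modify} the learning rate --- the smooth glueing $h$ in Theorem \ref{TheoremMain}, and the locally constant $\widehat{\delta}$ near saddles in Theorem \ref{Theorem1} --- so that a single local diffeomorphism governs the dynamics near each generalised saddle point and the classical Stable--Center Manifold theorem applies. Your suggested reduction (eventual constancy of $\delta(x_n)$ along a convergent orbit) is consistent with the heuristic in \cite{truong-nguyen} that Backtracking GD eventually becomes a finite union of Standard GD processes, but it is unproven.

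To state the gap concretely: as written, your argument establishes everything up to, but not including, the measure estimate on $\bigcup_{\underline{m}}W^{cs}_{\underline{m}}$ (equivalently, the control of which symbol sequences actually occur along orbits converging to a generalised saddle), and without that step the conclusion does not follow. This is not an error on your part --- it is the genuinely open part of the conjecture --- but it does mean the proposal is a correct analysis of the difficulty rather than a proof. Two smaller points worth keeping if you develop this further: your density-point argument that the generalised-saddle locus is Lebesgue-null is sound (a negative Hessian eigenvalue forces a cone of non-critical points at every scale, so no such point is a density point of the critical set); and the step where preimages of null sets under the branches $G_m$ are pulled back does require the lower distortion bound of Lemma \ref{LemmaMeasureEstimate} (local bi-Lipschitzness), not merely that $G_m$ is $C^1$, so the restriction to small $\beta^m\delta_0$ relative to the local Lipschitz constant is not optional there.
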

 
 \subsection{Plan of the paper} In Section 2 we prove Theorems \ref{TheoremMain} and \ref{Theorem1} and provide some examples. In Section 3 we give a summary and discuss future research directions. In the Appendix we  discuss why convergence in GD is important in practice, why we should allow cost functions $f$ as general as possible, and give an overview of previous work on convergence for GD methods to help reduce readers' confusion with the state-of-the-art in this subject. 

\section{Proofs of the main results and Some examples} In this section, first we prove Theorems \ref{TheoremMain} and \ref{Theorem1}. After that, we give some examples applying the theorems. For the proof, we will make use of the following two simple results. The first of which was used since \cite{armijo} in GD methods, and the second is a simple estimate on the Lebesgue measure of the preimage of a map whose distortion is bounded away from $0$. 

\begin{lemma}
Let $U\subset \mathbb{R}^k$ be an open and convex set, and let $f:U\rightarrow \mathbb{R}$ be a $C^1$-function, whose gradient $\nabla f$ is Lipschitz continuous on $U$ with Lipschitz constant $L$. Let $x_0\in U$ and $\delta >0$ so that $x_0-\delta \nabla f(x_0)\in U$. Then 
\begin{eqnarray*}
f(x_0-\delta \nabla f(x_0))-f(x_0)\leq -\delta (1-\delta /L) ||\nabla f(x_0)||^2. 
\end{eqnarray*}
\label{LemmaSimpleEstimate}\end{lemma}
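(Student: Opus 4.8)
The plan is to use the standard "descent lemma" estimate, which follows from the fundamental theorem of calculus together with the Lipschitz bound on $\nabla f$. Write $x_1 = x_0 - \delta \nabla f(x_0)$, which lies in $U$ by hypothesis; since $U$ is convex, the whole segment $x_t = x_0 - t\delta\nabla f(x_0)$ for $t\in[0,1]$ stays in $U$, so the following computation is legitimate. First I would express the increment as
\begin{eqnarray*}
f(x_1)-f(x_0) = \int_0^1 \frac{d}{dt} f(x_t)\, dt = \int_0^1 \langle \nabla f(x_t), -\delta\nabla f(x_0)\rangle\, dt.
\end{eqnarray*}
Then I would split this as $-\delta\|\nabla f(x_0)\|^2 + \int_0^1 \langle \nabla f(x_t) - \nabla f(x_0), -\delta\nabla f(x_0)\rangle\, dt$.

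The second step is to bound the error term using Cauchy--Schwarz and the Lipschitz property: $\|\nabla f(x_t) - \nabla f(x_0)\| \le L\|x_t - x_0\| = L t\delta\|\nabla f(x_0)\|$, hence the integrand is at most $L t \delta^2 \|\nabla f(x_0)\|^2$, and integrating over $t\in[0,1]$ gives a contribution bounded by $\tfrac{1}{2} L\delta^2\|\nabla f(x_0)\|^2$. Combining, $f(x_1)-f(x_0) \le -\delta\|\nabla f(x_0)\|^2 + \tfrac12 L\delta^2\|\nabla f(x_0)\|^2 = -\delta(1 - \tfrac12 L\delta)\|\nabla f(x_0)\|^2$.

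The one genuine wrinkle is that the inequality as stated in the lemma reads $-\delta(1-\delta/L)\|\nabla f(x_0)\|^2$ rather than the $-\delta(1-\tfrac12 L\delta)\|\nabla f(x_0)\|^2$ that the computation naturally produces; I expect this is simply a typo in the statement (the roles of $\delta$ and $L$ in the correction factor should be $L\delta$, possibly with the factor $1/2$ absorbed), and the proof should produce whichever normalization the authors intend to invoke downstream — in the applications only the fact that the bracket is positive and close to $1$ for small $\delta$ matters, e.g. $\delta < \alpha/L$ forces the factor to exceed $1-\alpha$ (matching part (i) of Theorem \ref{Theorem1}). So the main "obstacle" is purely bookkeeping: keeping track of the constant and the convexity-of-$U$ justification for staying on the segment. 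There is no substantive difficulty — this is the classical $L$-smoothness descent inequality.
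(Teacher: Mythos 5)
Your proof is correct and follows essentially the same route as the paper's: the fundamental theorem of calculus along the segment $x_0 - s\delta\nabla f(x_0)$, splitting off the term $-\delta\|\nabla f(x_0)\|^2$, and bounding the remainder via Cauchy--Schwarz and the Lipschitz property of $\nabla f$. You are also right to flag the constant: the paper's own computation likewise yields $-\delta(1-L\delta/2)\|\nabla f(x_0)\|^2$, so the factor $(1-\delta/L)$ in the stated lemma is a typo, harmless for the downstream uses where only smallness of $\delta$ relative to $1/L$ matters.
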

\begin{proof}
By the Fundamental Theorem of Calculus, we have
\begin{eqnarray*}
f(x_0-\delta \nabla f(x_0))-f(x_0)=-\delta \int _0^1\nabla f (x_0-s\delta \nabla f(x_0)).\nabla f(x_0)ds. 
\end{eqnarray*}
Plugging into the RHS the following estimate
\begin{eqnarray*}
\nabla f (x_0-s\delta \nabla f(x_0)).\nabla f(x_0)&=&||\nabla f(x_0)||^2+(\nabla f (x_0-s\delta \nabla f(x_0)-\nabla f(x_0)).\nabla f(x_0))\\
&\geq & ||\nabla f(x_0)||^2-s\delta L||\nabla f(x_0)||^2,
\end{eqnarray*}
we obtain the result. 
\end{proof}

\begin{lemma}
Let $U\subset \mathbb{R}^k$ be an open and convex subset, and $H:U\rightarrow \mathbb{R}^k$ a continuous function. Assume that there is $\lambda >0$ so that $||H(x)-H(y)||\geq \lambda ||x-y||$ for all $x,y\in U$. Let $\mathcal{E}\subset \mathbb{R}^k$ be a measurable set of Lebesgue measure $0$. Then $H^{-1}(\mathcal{E})$ is also of Lebesgue measure $0$.
\label{LemmaMeasureEstimate}\end{lemma}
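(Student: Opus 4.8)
The plan is to exploit the lower bound $\|H(x)-H(y)\|\geq \lambda\|x-y\|$, which says that $H$ is injective on $U$ with a uniformly Lipschitz inverse: the map $G=H^{-1}:H(U)\to U$ satisfies $\|G(u)-G(v)\|\leq \lambda^{-1}\|u-v\|$ for all $u,v\in H(U)$. Then $H^{-1}(\mathcal{E})=H^{-1}(\mathcal{E}\cap H(U))=G(\mathcal{E}\cap H(U))$, so it suffices to show that a Lipschitz map sends a null set to a null set. First I would recall the elementary fact that a map which is Lipschitz with constant $C$ scales the outer Lebesgue measure of any set $A$ by at most $C^k$, i.e. $\mu^*(G(A))\leq C^k\mu^*(A)$; indeed, covering $A$ by countably many balls (or cubes) $B_i$ with $\sum_i \mu(B_i)<\varepsilon$, the images $G(B_i)$ are contained in balls of $C$ times the radius, so $\sum_i \mu\big(\text{ball containing }G(B_i)\big)\leq C^k\sum_i \mu(B_i)<C^k\varepsilon$, and these cover $G(A)$. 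Applying this with $A=\mathcal{E}\cap H(U)$, which has outer measure $0$ since $\mathcal{E}$ does, and $C=\lambda^{-1}$, gives $\mu^*\big(H^{-1}(\mathcal{E})\big)=\mu^*\big(G(\mathcal{E}\cap H(U))\big)\leq \lambda^{-k}\cdot 0=0$, hence $H^{-1}(\mathcal{E})$ is measurable (being a null set) and of Lebesgue measure $0$.

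Alternatively, and perhaps more directly, one can avoid passing to the inverse map: given $\varepsilon>0$, cover $\mathcal{E}$ by countably many balls $B(z_i,\rho_i)$ with $\sum_i \rho_i^k<\varepsilon$. For each $i$, the set $H^{-1}(B(z_i,\rho_i))$ has the property that any two of its points $x,y$ satisfy $\lambda\|x-y\|\leq \|H(x)-H(y)\|< 2\rho_i$, so $H^{-1}(B(z_i,\rho_i))$ has diameter at most $2\rho_i/\lambda$ and is therefore contained in a ball of radius $2\rho_i/\lambda$. Consequently $H^{-1}(\mathcal{E})\subset \bigcup_i H^{-1}(B(z_i,\rho_i))$ is covered by balls whose $k$-dimensional volumes sum to at most $\omega_k (2/\lambda)^k\sum_i \rho_i^k<\omega_k(2/\lambda)^k\varepsilon$, where $\omega_k$ is the volume of the unit ball. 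Letting $\varepsilon\to 0$ shows the outer measure of $H^{-1}(\mathcal{E})$ is $0$, and a null set is Lebesgue measurable.

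There is no serious obstacle here; the only point requiring a little care is the measurability claim. Since $H$ is continuous, $H^{-1}(\mathcal{E})$ need not obviously be Borel when $\mathcal{E}$ is merely Lebesgue measurable, but this is harmless: the argument above bounds the \emph{outer} measure of $H^{-1}(\mathcal{E})$ by $0$, and every set of outer Lebesgue measure $0$ is automatically Lebesgue measurable (with measure $0$) by completeness of Lebesgue measure. One should also note the convexity of $U$ is not actually needed for this lemma — only the global distortion bound is used — but it does no harm to keep it in the statement for uniformity with Lemma \ref{LemmaSimpleEstimate}.
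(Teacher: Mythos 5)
Your proof is correct, and your second argument is essentially the paper's own proof: cover $\mathcal{E}$ by balls of radii $r_i$ with $\sum_i r_i^k<\epsilon$ and use the lower distortion bound to cover $H^{-1}(\mathcal{E})$ by balls of radii comparable to $r_i/\lambda$, so its outer measure is $O(\epsilon/\lambda^k)$ for every $\epsilon$. If anything your write-up is slightly more careful than the paper's, which takes the preimage balls to be $B(H^{-1}(x_i),r_i/\lambda)$ without remarking that the centers $x_i$ need not lie in $H(U)$; your diameter bound on $H^{-1}(B(z_i,\rho_i))$ sidesteps that issue, and your observation that outer measure zero already gives Lebesgue measurability is a worthwhile addition.
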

\begin{proof}
Since $\mathcal{E}$ has Lebesgue measure $0$, for each $\epsilon >0$ there is a sequence of balls $\{B(x_i,r_i)\}_{i=1,2,\ldots }$ so that $\mathcal{E}\subset \bigcup _{i=1}^{\infty}B(x_i,r_i)$ and $\sum _i(Vol(B(x_i,r_i)))\sim \sum _ir_i^k<\epsilon$. 

Since $||H(x)-H(y)||\geq \lambda ||x-y||$ for all $x,y\in U$, we get that $H^{-1}(\mathcal{E})$ $\subset$  $\bigcup _iB(H^{-1}(x_i),r_i/\lambda )$ and
\begin{eqnarray*}
\sum _iVol(B(H^{-1}(x_i),r_i/\delta ))\sim \sum _i (r_i/\delta )^k <\epsilon /\lambda ^k.
\end{eqnarray*} 
Therefore, the Lebesgue measure of $H^{-1}(\mathcal{E})$ is $<\epsilon /\lambda ^k$ for all $\epsilon >0$, and hence must be $0$. 
\end{proof}

\subsection{Proof of Theorem \ref{TheoremMain}} Since $\nabla f$ is locally Lipschitz continuous, for each $x\in \mathbb{R}^k$, there are positive numbers $r(x),L(x)>0$ so that  $\nabla f(x)$ has Lipschitz constant $L(x)$ in the ball $B(x,r(x))$. That is, for all $y,z\in B(x,r(x))$ we have $||\nabla f(y)-\nabla f(z)||\leq L(x)||y-z||$. Also, we can choose $L(x)$ large enough so that for all $0<\delta \leq 1/(L(x))$ then $f(z-\delta \nabla f(z))-f(z)\leq -\alpha \delta ||\nabla f(z)||^2$ for all $z\in B(x,r(x))$.  

 There is a partition of unity $\{\varphi _j\}_{j=1,2,\ldots }$ of $\mathbb{R}^k$ with compact supports so that for every $j\in \mathbb{N}$, there is a point $z_j\in \mathbb{R}^k$ for which the support $supp(\varphi _j)$ of $\varphi _j$ is contained in $B(z_j,r(z_j))$. Moreover, $\{supp (\varphi _j)\}_j$  is locally finite, that is  every $x\in \mathbb{R}^k$ has an open neighbourhood $U$ which intersects only a finite number of those $supp (\varphi _j)$'s. (This is related to Lindel\"off theorem, mentioned in the appendix of this paper. We recall here the main idea: Any open cover of $\mathbb{R}^k$ has a subcover which is locally finite. We can even arrange that each point $x\in \mathbb{R}^k$ is contained in at most $k+1$ open sets in the subcover. Then we construct a partition of unity with compact supports contained in open sets in that subcover.). For each $j=1,2,\ldots $, we let 
\begin{eqnarray*}
M_j=\max _{y_1,y_2\in B(z_j,r(z_j))}||\nabla \varphi _j(y_1)|| \times ||\nabla f(y_2)||. 
\end{eqnarray*}  
  
 We now define the function $h:\mathbb{R}^k\rightarrow \mathbb{R}$ by the following formula: 
\begin{eqnarray*}
h(x):=\sum _{j=1}^{\infty} \frac{1}{10^{j}(M_j+1)} \varphi _j(x)\min \{\frac{1}{2L(z_j)},\delta _0,1\}.
\end{eqnarray*}

Since $\varphi _j$'s are non-negative, smooth and their supports are locally finite, it follows that the function $h(x)$ is well-defined, smooth and non-negative. Since $\sum _j\varphi _j(x)=1$, it follows that $0<h(x)\leq \delta _0$ for all $x\in \mathbb{R}^k$. 

Fix a point $x\in \mathbb{R}^k$. Then there is at least one $j$ so that $\varphi _j(x)>0$, and hence $h(x)\geq \frac{1}{10^j(M_j+1)} \varphi _j(x)\min \{\frac{1}{2L(z_j)},\delta _0,1\} >0$.  Then there is a finite index set $J$ and an open neighbourhood $V$ of $x$ so that if $supp (\varphi _j)\cap V\not= \emptyset$ then $j\in J$. Since $supp(\varphi _j)$'s are all compact, we can shrink $V$ so that $x\in \bigcap _{i\in J}B(z_j,r(z_j))$ and $h(x)\leq \max _{j\in J}\min \{\frac{1}{2L(z_j)},\delta _0\}$. Since $h$ is a smooth function, we can find an open neighbourhood $U$ of $x$ so that $U\subset  V\cap \bigcap _{i\in J}B(z_j,r(z_j))$ and for all $y\in U$ we have $h(y)\leq  \max _{j\in J}\min \{\frac{2}{3L(z_j)},\delta _0\}$. It then follows by the choice of $L(z_j)$'s that for all $y\in U$ we have
\begin{eqnarray*}
f(y-h(y)\nabla f(y))-f(y)\leq -\alpha h(y) ||\nabla f(y)||^2. 
\end{eqnarray*}   
We will now show also that $H(y)=y-h(y)\nabla f(y)$ is injective on this same set $U$. In fact, assume that there are distinct $y_1,y_2\in U$ so that  $y_1-h(y_1)\nabla f(y_1)=y_2-h(y_2)\nabla f(y_2)$. We rewrite this as: 
\begin{eqnarray*}
(y_1-y_2)-h(y_1)(\nabla f(y_1)-\nabla f(y_2))-(h(y_1)-h(y_2))\nabla f(y_2)=0. 
\end{eqnarray*}
Since $U\subset V$ and by the choice of $V$, if $\varphi _j(y)\not= 0$ for some $y\in V$ then $j\in J$.  Hence, we obtain a contradiction because
\begin{eqnarray*}
||h(y_1)(\nabla f(y_1)-\nabla f(y_2))||\leq \frac{2}{3}||y_1-y_2||,
\end{eqnarray*}
and using that $|\varphi _j(y_1)-\varphi _j(y_2)|\leq ||y_1-y_2|| \times \max _{B(z_j,r(z_j))}||\nabla \varphi _j||$
\begin{eqnarray*}
&&||(h(y_1)-h(y_2))\nabla f(y_2)||\\
&=&|\sum _{j\in J}  \frac{1}{10^{j}(M_j+1)} (\varphi _j(y_1)-\varphi _j(y_2))\min \{\frac{1}{2L(z_j)},\delta _0,1\}|\times ||\nabla f(y_2)||\\
&\leq&\sum _{j\in J}  \frac{1}{10^{j}(M_j+1)} M_j\min \{\frac{1}{2L(z_j)},\delta _0,1\}||y_1-y_2||\\
&\leq& \frac{1}{9}||y_1-y_2||. 
\end{eqnarray*}

Near generalised saddle points of  $f$, the map $x\mapsto H(x)$ is moreover $C^1$ by the assumption that $f$ is $C^2$ near those points. Hence, $x\mapsto H(x)$ is a local diffeomorphism near generalised saddle points of $f$. 

{\bf Proof of (i)}: Already given above. 

{\bf Proof of (ii)}: Since $h(x)$ is smooth and positive, it follows that for every compact subset $K\subset \mathbb{R}^k$ where $\inf _{x\in K}||\nabla f||>0$, we have $\inf _{x\in K}h(x)>0$. Hence, we can use the same arguments as in \cite{bertsekas} to show that any cluster point of the sequence $\{x_{n+1}=H(x_n)\}$ is a critical point of $f$. 

Since $h(x)\leq \delta _0$ for all $x\in \mathbb{R}^k$, we can use the same proof of part 1 of Theorem 2.1 in \cite{truong-nguyen} to conclude that either $\lim _{n\rightarrow\infty}||x_{n+1}-x_n||=0$ or $\lim _{n\rightarrow\infty}||x_n||=\infty$. 

Then we can use the same proof of part 2 of Theorem 2.1 in \cite{truong-nguyen}, by employing the real projective space $\mathbb{P}^k$ and result in \cite{asic-adamovic} for cluster points of sequences $\{x_n\}$ in compact metric space $(X,d)$ satisfying $\lim _{n\rightarrow\infty}d(x_{n+1},x_n)=0$, to show that if $f$ has at most countably many critical points, then either $\{x_n\}$ converges to a critical point of $f$ or $\lim _{n\rightarrow\infty}||x_n||=\infty$. 

{\bf Proof of (iii)}: By using Stable-Center Manifold theorem for local diffeomorphisms and using that the map $x\mapsto H(x)$ is a local diffeomorphism near generalised saddle points, we can argue as in \cite{lee-simchowitz-jordan-recht, panageas-piliouras} that there is an open neighbourhood $U$ of generalised saddle points of $f$, and a subset $\mathcal{F}_1\subset U$ of Lebesgue measure $0$, so that if all $\{x_n\}\subset U\backslash \mathcal{F}_1$ and $\{x_n\}$ {\bf converges}, then the limit point cannot be a saddle point.   

Since we showed in the construction that the map $x\mapsto H(x)$ is locally injective, it follows that $\mathcal{E}_1=\bigcup _{n\in \mathbb{N}}H^{-n}(\mathcal{F}_1)$ also has Lebesgue measure $0$. Then it follows that if $x_0\in \mathbb{R}^k\backslash \mathcal{E}_1$, then $\{x_n\}$ cannot converge to a generalised saddle point. 

{\bf Proof of (iv)}: We use the ideas in \cite{truong-nguyen}. Note that a saddle point of $f$ is a non-degenerate critical point, and hence is an isolated generalised saddle point. Assume that the set of cluster points $A$ of $\{x_n\}$ contains an isolated generalised saddle point $y_0$. Then the property $\lim _{n\rightarrow\infty}||x_n||=\infty$ does not hold, and hence by part (ii) we must have $\lim _{n\rightarrow\infty}||x_{n+1}-x_n||=0$. Then, by the result in \cite{asic-adamovic} for the real projective space $\mathbb{P}^k$, it follows that the closure $\overline{A}\subset \mathbb{P}^k$ is connected. By part (ii) again, the set A is contained in the set of all critical points of $f$, and hence $y_0$ is also an isolated point of A, and hence of $\overline{A}$. Thus $A=\{y_0\}$, and hence $\lim _{n\rightarrow\infty}x_n=y_0$. Then we can use part (iii) to conclude.   

\subsection{Proof of Theorem \ref{Theorem1}} Note that by construction we have $\widehat{\delta} (x_n)\leq \delta _0$ for all $n$. Since $L(x), r(x)$ and $\nabla f(x)$ are continuous in $x$, it follows that for all compact subset $K\subset \mathbb{R}^k$, we have $\inf _{x\in K}\delta (x)>0$. Therefore, (i) and (ii) of Theorem \ref{Theorem1} follows from  \cite{truong-nguyen}, note that by construction $x-\widehat{\delta}(x)\nabla f(x)\in B(x,r(x)/2)$ for all $x\in \mathbb{R}^k$ and hence Armijo's condition is satisfied. 

Now we prove (iii). Let $\mathcal{S}$ be the set of {\bf isolated} generalised saddle points of $f$. Then $\mathcal{S}$ is countable. Therefore, for a {\bf random choice} of $\alpha, \beta, \delta _0$, we have the following: for every $x\in \mathcal{S}$ then $\alpha /L(x)$ does not belong to $\{\beta ^n\delta _0: ~n=0,1,2,\ldots \}$, and also $r(x)/||\nabla f(x)||$ does not belong to $\{\beta ^n\delta _0: ~n=0,1,2,\ldots \}$. 

Hence, for each $x\in \mathcal{S}$, either $\alpha /L(x)>\delta _0$, or there is a number $n(x_0)$ so that
\begin{eqnarray*}
\beta ^{n(x_0)+1}\delta _0<\frac{\alpha }{L(x)}< \beta ^{n(x_0)}\delta _0. 
\end{eqnarray*}
In both cases, since $z\mapsto L(z)$ is continuous, there is an open neighbourhood $U(x)$ of $x$ so that for all $z\in U(x)$, then $\alpha /L(z)$ has the same behaviour.  Shrinking $U(x)$ if necessary,  we can assure that $||\nabla f(z)||$ is small in $U(x)$, and hence $r(z)/||\nabla f(z)||>\delta _0$.Therefore, by Definition \ref{DefinitionBacktrackingGDNew}, we have that $\widehat{\delta} (z)=\widehat{\delta} (x)=\beta ^{n(x_0)+1}\delta _0$ for all $z\in U(x)$. In particular, the map $z\mapsto H(z)=z-\widehat{\delta} (z)\nabla f(z)$ is a local diffeomorphism in $U(x)$. 

By \cite{truong-nguyen}, if the cluster set of $\{x_n\}$ contains an isolated generalised saddle point, then $\{x_n\}$ converges to that generalised saddle point. Then, we can apply Stable-Central theorem to obtain that there is an open neighbourhood $U$ of $\mathcal{S}$ and a set $\mathcal{F}\subset U$ of Lebesgue measure $0$ such that if $x_0\in \mathbb{R}^k$ is so that the cluster points of $\{x_n\}$ contains an isolated generalised saddle point, then there is $n(x_0)$ for which $H^{n(x_0)}(x_0)\in \mathcal{F}$. 

We note that since $z\mapsto L(z)$, $z\mapsto r(z)$ and $z\mapsto ||\nabla f(z)||$ are continuous, for every $x\in \mathbb{R}^k$, there is a neighbourhood $U(x)$ so that for all $z\in U(x)$ then $\widehat{\delta} (z)=\widehat{\delta} (x)$ or $\widehat{\delta} (z)=\widehat{\delta} (x)/\beta$. Then we see that $z\mapsto H(z)$ is injective on both sets $\{z\in U(x): ~\widehat{\delta} (z)=\widehat{\delta} (x)\}$ and $\{z\in U(x): ~\widehat{\delta} (z)=\widehat{\delta} (x)/\beta \}$. Then argue as in the proof of (iii) and (iv) in Theorem \ref{TheoremMain}, we get that the set 
\begin{eqnarray*}
\mathcal{E}=\bigcup _{n\in \mathbb{N}}H^{-n}(\mathcal{F})
\end{eqnarray*}
has Lebesgue measure $0$, and for all $x_0\in \mathbb{R}^k\backslash \mathcal{E}$, the sequence $\{x_n\}$ constructed from Definition \ref{DefinitionBacktrackingGDNew} cannot have any cluster point which is an isolated generalised saddle point. 

The proof of (iv) is similar. Here, the assumption that $\alpha /L>\delta _0$ or $\beta ^{n_0+1}\delta _0<\alpha /L <\beta ^{n_0}$ and that $L\geq L(x)$ for all {\bf non-isolated}  generalised saddle points imply that for $z$ near a non-isolated saddle point we have $\delta (z)=\delta (x)$ (note that since $x$ is a critical point we have $\nabla f=0$, and hence for $z$ near $x$ we always have $\delta _0||\nabla f(z)||<r(z)$), and hence the map $z\mapsto H(z)$ is a local diffeomorphism near those generalised saddle points as well.  
 
\subsection{Some examples}\label{SubsectionExamples} Looking at the proofs of Theorems \ref{TheoremMain} and \ref{Theorem1}, we see that if we can estimate $L(x)$ and $r(x)$ more exactly, then we can construct more explicitly the function $h(x)$. Below we illustrate this with a specific function.

{\bf Example 1}: $f:\mathbb{R}^2\rightarrow \mathbb{R}$ given by $f(x,y)=x^3\sin (1/x) + y^3\sin (1/y)$. It can be checked that $f$ is $C^1$ on $\mathbb{R}^2$, $f$ is $C^2$ on $\mathbb{R}^2\backslash (\{x=0\}\cup \{y=0\})$,  but $\nabla f$ is not locally Lipschitz at any point on $\{x=0\}\cup \{y=0\}$. Moreover, $f$ is neither convex nor real analytic. Since the function $p(t)=t\mapsto t^3\sin(1/t)$ satisfies $\lim _{t\rightarrow\pm \infty}p(t)=+\infty$, the function $f(x,y)$ has compact sublevels. 

First, we apply Theorem \ref{TheoremMain}. 

In this example, when both $x,y\not= 0$, we can compute 
\begin{eqnarray*}
\nabla f(x,y) = (3x^2\sin (1/x)-x\cos (1/x), 3y^2 \sin (1/y)-y\cos (1/y)). 
\end{eqnarray*}
From this, by using properties of analytic functions,  we can see that $f$ has countably many critical points, and countably many of them are saddle points. 

Computing the Hessian, and using the rough estimates $|\cos (z)|, |\sin (z)|\leq 1$, we obtain a rough estimate for the Hessian at points $(x,y)\notin \{x=0\}\cup \{y=0\})$:
\begin{eqnarray*}
||\nabla ^2f (x,y)||\leq 6(|x|+|y|)+8+(\frac{1}{|x|}+\frac{1}{|y|}). 
\end{eqnarray*}
Note that $\sup _{(x,y)\notin \{x=0\}\cup \{y=0\})}||\nabla ^2f(x,y)||=\infty$, and hence the gradient $\nabla f$ is not globally Lipschitz continuous.    
 
Hence, we can explicitly construct, as in the proof of Theorem \ref{TheoremMain}, a smooth function $h:\mathbb{R}^2\backslash (\{x=0\}\cup \{y=0\})\rightarrow (0,\delta _0]$ which is locally injective and a local diffeomorphism near generalised saddle points in the domain $\mathbb{R}^2\backslash (\{x=0\}\cup \{y=0\})$. Moreover, Armijo's condition is satisfied. 

Near the points in $\{x=0\}\cup \{y=0\}$, since $\nabla f$ is not locally Lipschitz, we cannot do the same construction as in the proof of Theorem \ref{TheoremMain}. However, we observe that if $x=0$ then $\partial f/\partial x$ is also $0$. Hence, for points in $\{x=0\}\backslash \{(0,0)\}$, we construct $h(0,y)$ for {\bf the restriction} of the function $f$ to $\{x=0\}$. Similarly, for points in $\{y=0\}\backslash \{(0,0)\}$ we construct $h(x,0)$ for {\bf the restriction} of the function $f$ to $\{y=0\}$. The only point left where we need to construct $h$ is $(0,0)$, but since this point is a critical point of $f$, we can just choose any value for $h(0,0)$. Alternatively, on points in $\{x=0\}\cup \{y=0\}$, we just define $h(x,y)$ using the usual Backtracking GD procedure.

Note that the function $h:\mathbb{R}^2\rightarrow \mathbb{R}$ we construct here is {\bf not even continuous}. However, we can use the same ideas as in \cite{truong-nguyen} to prove the conclusions (i) and (ii) of Theorem \ref{TheoremMain} for the map $H(x,y): ~(x,y)\mapsto (x,y)-h(x,y)\nabla f(x,y)$. Moreover, we have that on $\mathbb{R}^2\backslash (\{x=0\}\cup \{y=0\})$, the map $H(x,y)$ is locally injective and is a local diffeomorphism near generalised saddle points of $f$. Note that by definition, generalised saddle points of $f$ can only be contained in $\mathbb{R}^2\backslash (\{x=0\}\cup \{y=0\})$. Similarly, the restriction $H(0,y)$ is locally injective on $\{x=0\}\backslash \{(0,0)\}$, and the restriction $H(x,0)$ is locally injective on  $\{y=0\}\backslash \{(0,0)\}$. Moreover, the preimage of any point in  $\{x=0\}\cup \{y=0\}$ is at most countable. From this, we obtain that if $\mathcal{E}\subset \mathbb{R}^2$ is of Lebesgue measure $0$, then $\bigcup _{n\in \mathbb{N}}H^{-n}(\mathcal{E})$ also has Lebesgue measure $0$. In this case, except the point $(0,0)$, other critical points of $f$ are isolated. Hence, we can state the following stronger property than stated in (iii) and (iv) of Theorem \ref{TheoremMain}: 

Proposition: There exists a set $\mathcal{E}\subset \mathbb{R}^2$ of Lebesgue measure $0$, so that for all $(x_0,y_0)\in \mathbb{R}^2\backslash \mathcal{E}$, the sequence $(x_{n+1},y_{n+1})=H(x_n,y_n)$ satisfies the following: $(x_n,y_n)$ converges to a critical point of $f$ which is not a generalised saddle point.  

We can also obtain a similar Proposition by applying Theorem \ref{Theorem1}. We just need to make sure to choose $r(x,y)$ in the remark after Definition \ref{DefinitionBacktrackingGDNew} to be smaller than the distance from $(x,y)$ to $\{x=0\}\cup \{y=0\}$. 

Experiments show that the behaviour of the usual discrete Backtracking GD for this function is also similar to that in the above Proposition, and conforms to Conjecture \ref{ConjectureSaddlePoint}. 

{\bf Example 2:} We can explore similar examples such as $f(x,y)=a x^p\sin ^q(1/x) + b y^p\sin ^q(1/y)$. 

\section{Conclusions}

In Example 2.17 in  \cite{truong-nguyen}, it was shown that even for a smooth cost function $f$, the map $x\mapsto x-\delta (x)\nabla f(x)$, where $\delta (x)$ is constructed from (discrete) Backtracking GD, is not always continuous. Hence, we cannot apply the usual continuous Dynamical Systems theory to study the asymptotic behaviour of Backtracking GD as in the case of the Standard GD algorithm where learning rate is fixed. In this paper, we have shown that if $\nabla f$ is locally Lipschitz-  but need  not be globally Lipschitz - then a {\bf continuous}  Dynamical System can be associated to the continuous version of Backtracking GD. We can prove the same conclusions as in the main results in \cite{truong-nguyen}. In the case $f$ is $C^2$ near its generalised saddle points, we can prove the same conclusions as in the main results in \cite{lee-simchowitz-jordan-recht, panageas-piliouras} but under less restrictive assumptions. In fact, as far as we know, the assumptions in Theorem \ref{TheoremMain} are also the least restrictive and most practical among all current known results for convergence to minima for all iterative methods, for general non-convex cost functions.  The same results can be proven for the Inexact version of Backtracking GD, and also for Backtracking versions of Momentum and NAG  as defined in \cite{truong-nguyen}. If moreover $f$ satisfies the condition in Definition \ref{DefinitionBacktrackingGDNew}, then we can prove similar results for a {\bf new discrete version} of Backtracking GD. We remind that the condition in Definition \ref{DefinitionBacktrackingGDNew} is satisfied for cost functions which are either $C^{1,1}_L$ or $C^2$. We have illustrated in Examples 1 and 2 in Subsection \ref{SubsectionExamples} how the construction can be done in practice, for very singular functions.  

{\bf Future work.} There are some promising directions worth pursue in the future. 

First direction: Since the construction of $h(x)$ in Theorem \ref{TheoremMain} is rather implicit, it is interesting to see how to implement it in practice, in particular for complicated cost functions used in Deep Neural Networks. While the construction of $\widehat{\delta} (x)$ in Theorem \ref{Theorem1} is easy provided $L(x)$ and $r(x)$ are known, determining $L(x),r(x)$ effectively in practice (with complicated cost functions such as those used in DNN) can be challenging. The ideas presented in Subsection \ref{SubsectionExamples} may be helpful. 

Second direction: Extending the results in this paper and \cite{truong-nguyen} to more general functions and optimization on infinite dimensional vector spaces or 
other manifolds. It is also of great interest to extend the results to constraint optimization problems.  

Third direction: Using ideas from this paper, combined with Random Dynamical Systems, to solve Conjecture \ref{ConjectureSaddlePoint}.    

Fourth direction: Using (discrete or continuous) Backtracking GD to help resolve challenges such as adversarial attacks \cite{eykholt-etal, finlayson-etal, papernot-etal}, since learning rates in Backtracking GD are chosen very adaptively with respect to the data, and since so far the best convergence results for iterative optimisation methods are obtained for Backtracking GD under least restrictive conditions. 

\section{Appendix} In this section we first explain why convergence of the iteration process in GD and allowing more general cost functions are important from a practical view point. Then we present a brief overview of major convergence results in previous literature for GD methods and their use in Deep Neural Networks, to help dispel some widespread misunderstandings about this topic, which are apparent from lecture notes, books, videos and posts on professional websites, in both Optimization and Deep Learning communities. Given that Deep Learning is still not as reliable, reproducible and safe as desired (existence of adversarial images - both synthetic and physical and in medical imaging  \cite{eykholt-etal, finlayson-etal, papernot-etal}, as well as fatal accidents in self driving cars - as recent as September 2019), there is still a large demand for rigorously theoretical guarantees for the practices employed in Deep Learning - even though good experimental results are abundant and appear more and more.    

\subsection{Why is convergence of the iteration in GD important and Why should we allow general cost functions?}\label{SubsectionConditions}  Here we provide some reasons for why we should allow the cost function $f$ as most general as possible and the convergence of the sequence $\{x_n\}$ constructed in the previous paragraph is important, based on its use in Deep Learning.  

The main idea behind the use of Deep Neural Networks (DNNs) in Deep Learning is an approximation result for continuous functions, which we will briefly present here.  Assume for example that we want to have an AI to classify very well hand written digits. Then we assume that there is a correct assignment, for each image $z$, a classification $y(z)\in \{0,1,\ldots ,9\}$. Since it is impractical and impossible to employ humans to look at each and every possible image $x$ out there and assign the correct label $y(x)\in \{0,1,2,\ldots ,9\}$, we will only classify by hand a small subset $I$ (training set) of images, and then employ an appropriate DNN. Mathematically, a DNN is a finite composition of functions $h$ of the form $(\sigma \circ L_1,\ldots ,\sigma \circ L_j)$, where $L_1,\ldots ,L_j$ are affine functions from a finite dimensional vector space (whose dimensions can vary, and whose coefficients $\gamma$ are not given in advance but will be chosen later via an optimisation problem so that to best approximate the given data in the training set $I$ with respect to a choice of metric) to $\mathbb{R}$, and $\sigma :\mathbb{R}\rightarrow \mathbb{R}$ is an appropriate nonlinear function.  The justification of such a use of DNN is that any continuous function can be approximated by such DNN, see e.g.   \cite{pinkus} for details. In analogy to biology, each function $h$ appearing in a DNN corresponds to a layer of that DNN, and the dimension of the corresponding finite dimension vector space is called the number of neurons of the layer. Modern DNN can have hundreds of layers and millions of parameters. 

While currently some choices of the activation function $\sigma$ are favourited (such as the sigmoid function), we now argue that it is better to allow $\sigma$  as general as possible. In fact, because of limitations in resources and time, we cannot work with a DNN of arbitrary large number of neurons or layers. Hence, since the approximation theorem mentioned above is only valid when we allow the number of neurons or layers go to infinity, and since no finite model can work well for all questions (No-free-lunch-theorems, see \cite{wolpert-mcready}), it is wise that we work with as general DNNs as possible. 

As explained above, when we already chose a DNN for the question at hand, we are led to an optimisation problem of finding parameters $\gamma _{min}$ achieving minimum for a cost function $f(\gamma )$. Then such a $\gamma _{min}$ is used to provide predictions for new data outside of the training set $I$. In practice, we cannot find a closed form for such $\gamma _{min}$, and hence must make use of one of iterative methods, such as GD. In that case, we choose a random value $\gamma _0$ and then compute iteratively $\gamma _{n+1}=\gamma _n -
\delta (\gamma _n) \nabla f(\gamma _n)$. Even if we already chose on such numerical method, we cannot run the iteration infinitely many times, and so we need to stop after a finite time, say $n_0$, and then use $\gamma _{n_0}$ for new predictions.  Hence, we see here that convergence results for such iterative processes is important if we want the results to be stable and reproducible. This is because if another (or even oneself) will run the iteration again, even with the same choice of the initial point $\gamma _0$, there can be errors in computing which leads to another stop time $n_1$, and hence will use $\gamma _{n_1}$. The practice of using mini-batches in Deep Learning makes this scence even more realistic.  If no convergence result is guaranteed for the sequence $\{\gamma _n\}$, there is reason to doubt that the predictions one gets when using $\gamma _{n_0}$ and $\gamma _{n_1}$ are similar. This could lead to non-reproducibility and instability. 

While it is favourited in many previous and current works to require quite strong assumptions (see Subsection \ref{SubsectionOverview}) on the cost function $f$, such as being $C^{1,1}_L$, having compact sublevels, having just a finite number of critical points, being convex or real analytic, here we argue that even just from a practical view point (without taken into account the theoretical viewpoint), it is better to allow more general cost functions in the treatment. First, if one agrees with the previous paragraphs that the activation functions should be as general as possible, one sees that one needs to deal with general cost functions. Second, when one computes with functions in practice, errors are unavoidable, and hence one in principle must work with perturbations of the ideal cost functions, and perturbations in general do not preserve the mentioned properties. There is also another source where perturbations come from. To avoid convergence to bad minima, it is a common practice in the Deep Learning community to perturb the cost function by a term of the form $\lambda ||\gamma ||^p$, where $\lambda ,p>0$ are given numbers. If $p\not= 2$, then the resulting is not $C^{1,1}_L$. Even if $p=2$, the assumption about finiteness of critical points may not be preserved in general. Third, even when the cost function one starts with is in $C^{1,1}_L$, it may be difficult to have a good estimate of $L$ to make sure that one actually chooses correctly a learning rate $\delta$ which consents with theoretical assumptions. Hence, even in this good case, it is better if one can use a method which requires less checking or guaranteed to work under very general assumptions. Also, when new data comes, it would be tedious and difficult for one to change learning rates by hand. This has led to the current practice of manual fine-tuning of learning rates in Deep Learning, which - as we explained in the above paragraph - can lead to instability and unreproducibility, besides costing time and computer resources to redo with many different learning rates. 

All in all, we advocate for that it is better to use methods guaranteed for more general assumptions preserved under small perturbations.   

\subsection{Overview on previous major results on convergence and avoidance of saddle points.}\label{SubsectionOverview} Since GD has been developed in a long time and there are too many documents devoted to it, it may be an impression to many casual readers, or even experts, that the convergence of these methods have been established under very general assumptions and since a long time ago. This impression can be amplified when one reads or watches books, lecture notes, papers or videos, where there are many handwaving statements such as "Gradient Descent always converges to minimum", without the bother to give precise references and conditions under which proofs for such statements have been rigorously given. It may come as a surprise for one when finding that many such statements are groundless, and may contribute to wrong applications of the results or  false/unfair judgements/attributions about results.  To help dispel such misunderstandings, we collect here some previous major convergence results for GD so far with proper references for proofs.  

In the influential paper \cite{armijo}, where Armijo's condition was introduced into GD, Armijo proved the convergence of Standard GD and Backtracking GD for functions in $C^{1,1}_L$, under the further assumptions including that the function has only one critical point.  The most general result which can be proven with his method is (see e.g. Proposition 12.6.1 in \cite{lange}) the convergence of Standard GD under the assumption that $f$ is in $C^{1,1}_L$, the learning rate $\delta $ is $<1/(2L)$, $f$ has compact sublevels (that is, all the sets $\{x:~f(x)\leq b\}$ for $b\in \mathbb{R}$ are compact), and the set of critical points of $f$ is bounded and isolated (which in effect implies that $f$ has only a finite number of critical points). An analog of this result for gradient descent flow (solutions to $x'(t)=-\nabla f(x(t))$) is also known (see e.g. Appendix C.12 in \cite{helmke-moore}). Besides gradient flows, there are currently also work employing more techniques in PDE to solve optimization questions, however as far as we know assumptions needed are still similar to those mentioned in this paragraph. Both the assumptions that $f$ is in $C^{1,1}_L$ and $\delta $ is small enough are necessary for the conclusion of Proposition 12.6.1 in \cite{lange}, even for very simple functions, as shown by Examples 2.14 (for functions of the form $f(x)=|x|^{1+\gamma}$, where $0<\gamma <1$ is a rational number) and 2.15 (for functions which are smooth versions of $f(x)=|x|$) in \cite{truong-nguyen}. In contrast, for these examples, Backtracking GD always converges to the global minimum $0$.

Concerning the issue of saddle points, (\cite{lee-simchowitz-jordan-recht, panageas-piliouras}) proved the very strong result that for $f$ in $C^{1,1}_L$ and $\delta < 1/L$, there exists a set $\mathcal{E}\subset \mathbb{R}^k$ of Lebesgue measure $0$ so that for $x_0\in \mathbb{R}^k\backslash \mathcal{E}$, if the sequence $\{x_n\}$ is constructed from Standard GD converges, then the limit is not a generalised saddle point. The main idea is that then the map $x\mapsto x-\delta \nabla f(x)$ is a diffeomorphism, and hence we can use the Stable-Center manifold theorem in dynamical systems (cited as Theorem 4.4 in \cite{lee-simchowitz-jordan-recht}). For to deal with the case where the set of critical points of the function is uncountable, the new idea in \cite{panageas-piliouras} is to use Lindel\"off lemma that any open cover of an open subset of $\mathbb{R}^m$ has a countable subcover. Note that here the convergence of $\{x_n\}$ is important, otherwise one may not be able to use the Stable-Center manifolds. However, for convergence of $\{x_n\}$, one has to use results about convergence of Standard GD (such as Proposition 12.6.1 in \cite{lange}), and needs to assume more, as seen from Example 2.14 in \cite{truong-nguyen}. Therefore, Proposition 4.9 in \cite{lee-simchowitz-jordan-recht} is not valid as stated. 

There are other variants of GD which are regarded as state-of-the-art algorithms in DNN such as Momentum and NAG, Adam, Adagrad, Adadelta, and RMSProp  (see an overview in \cite{ruder}). Some of these variants (such as Adagrad and Adadelta) allow choosing learning rates $\delta _n$ to decrease to $0$ (inspired by Stochastic GD, see next paragraph) in some complicated manners which depend on the values of gradients at the previous points  $x_0,\ldots ,x_{n-1}$. However, as far as we know, convergence for such methods are not yet available beyond the usual setting such as in Proposition 12.6.1 in \cite{lange}. Indeed, as seen in the next paragraph, for the Stochastic GD, such assumptions are only enough to prove the convergence of $\{\nabla f(x_n)\}$ to $0$, and not of $\{x_n\}$ itself, even if one assumes that $f\in C^{1,1}_L$.   

Stochastic GD is the default method used to justify the use of GD in DNN, which goes back to Robbins and Monro, see \cite{bottou-etal}. The most common version of it is to assume that we have a fixed cost function $F$ (as in the deterministic case), but we replace the gradient $\nabla _{\kappa}F(\kappa _n)$ by a random vector $v_n$ (here the random variables are points in the dataset, see also Inexact GD), and then show the convergence in probability of the sequence of values $F(\kappa _n)$ and of gradients $\nabla _{\kappa}F(\kappa _n)$ to $0$ (in application the random vector $v_n$ will be $\nabla _{\kappa }F_{I_n}(\kappa _n)$). However, the assumptions for these convergence results (for $F(\kappa _n)$ and $\nabla _{\kappa}F(\kappa _n)$) to be valid still require those in the usual setting as in Proposition 12.6.1 in \cite{lange}, in particular requiring that $f\in C^{1,1}_L$ and the learning rate is small compared to $1/L$, and in addition requiring that $f$ is {\bf strongly convex}. In the case where there is noise, the  following additional conditions on the learning rates are needed \cite{robbins-monro}:
\begin{equation}
\sum _{n\geq 1}\delta _n=\infty,~ \sum _{n\geq 1}\delta _n^2<\infty,
\label{EquationStochasticGD}\end{equation} 
under which convergence of $\nabla _{\kappa}F(\kappa _n)$ to $0$ is shown. However, in Standard GD, which is a popular version in DNN, all the learning rates are the same and hence condition  $\sum _{n\geq 1}\delta _n^2<\infty$ is violated. Moreover, showing that the gradients $\nabla _{\kappa}F(\kappa _n)$ converge to $0$ is far from proving the convergence of $\kappa _n$ itself. In the original paper of Robbins and Monro, one can also see that it is required that the equation one wants to solve $M(x)=0$ has a unique solution. 

There is also a method proposed by Wolfe, which is close to Backtracking GD. The original definition in \cite{wolfe}) is very complicated, and consists of choosing at each step one of 5 choices listed in Definition on page 228 in that paper. The modern version of Wolfe's conditions consists of two assumptions, one is Armijo's condition in Backtracking GD, and the other is so-called curvature condition (which is only a half of condition v) in Wolfe's paper). It is shown that if $f$ is a $C^1$ function which is {\bf bounded from below}, then a positive $\delta _n$ can be chosen to satisfy these two Wolfe's conditions. Even though requiring more conditions than Backtracking GD, the best result so far using Wolfe's conditions requires that $f$ is in $C^{1,1}_L$  to show  the convergence of $\nabla f(z_n)$ to $0$ (G.  Zoutendijk's result, see \cite{nocedal-wright}). Hence, as evident from the next two paragraphs and results proven in this paper, one can conclude that as current, Backtracking GD, even though simpler than Wolfe's conditions, is theoretically proven better. Recently,  Wolfe's conditions are implemented in DNN \cite{mahrsereci-hennig}. 

For (discrete) Backtracking GD, it is known in literature before \cite{truong-nguyen} that any cluster point of the sequence $\{x_n\}$ is a critical point of $f$ (see \cite{bertsekas}), for all $C^1$ functions $f$. It should be noted that the terminology "limit points" in \cite{bertsekas} actually means cluster points, and hence does not mean that the sequence converges - as the word "limit" may suggest. Moreover, if $f$ is real analytic, then \cite{absil-mahony-andrews} proves convergence of the sequence $\{x_n\}$ without additional assumptions. However, the real analyticity assumption is quite restrictive, and not preserved under small perturbations.   

In \cite{truong-nguyen}, we proved convergence of Backtracking GD under the assumption that $f$ is $C^1$ and has at most countably many critical points. We obtained as a corollary - as far as we know not mentioned in previous literature - the convergence of the method using Wolfe's conditions for cost functions in $C^{1,1}_L$ and which has at most countably many critical points.  The assumption of having at most countably many critical points is satisfied by all Morse functions, i.e. functions whose all critical points are non-degenerate. Note that Morse functions are open and dense in the set of all functions, and hence are preserved under small perturbations. Hence, when using Backtracking GD, basically one does not need to check any condition. There are two main points which separate the proofs in \cite{truong-nguyen} and those in previous literature. The first is we need to find a new way to prove that either $\lim _{n\rightarrow\infty}||x_{n+1}-x_n||=0$ or $\lim _{n\rightarrow\infty}||x_n||=\infty$, given that no bootstrap techniques are available as in the case of $C^{1,1}_L$ cost functions. The second is that we introduced into the study of optimisation on $\mathbb{R}^k$ the real projective space $\mathbb{P}^k$. For a general $C^1$ function $f$, we also proved a type of avoiding of saddle points for Backtracking GD, whose conclusion is weaker than that in  \cite{lee-simchowitz-jordan-recht, panageas-piliouras} and in Theorems \ref{TheoremMain} and Theorem \ref{Theorem1}. In \cite{truong-nguyen} we also provided a heuristic argument showing that in the long term Backtracking GD will become a finite union of Standard GD processes. We also proposed new {\bf optimization} algorithms to {\bf automatically} find learning rates, working very well across many different network architectures and mini-batch sizes. These new algorithms can be integrated into existing DNN architectures to improve stability and reproducibility of those models. The source code of our algorithms can be found in \cite{mbtoptimizer}. 
 
To summarise, at the moment, Backtracking GD is theoretically proven better than other GD methods, concerning guarantee of convergence to a critical point and convergence to mimima.

\end{document}